\newcommand{\RR}{\mathbb{R}}
\newcommand{\rbr}[1]{\left(#1\right)}
\newcommand{\floorbr}[1]{\left\lfloor #1\right\rfloor}
\newcommand{\fbr}[1]{\left\{#1\right\}}
\newcommand{\ra}{\rightarrow}
\newcommand{\Ra}{\Rightarrow}
\newcommand{\half}[1][1]{\frac{#1}{2}}
\newcommand{\Array}[2]{\begin{array}{#1}#2\end{array}}
\newcommand{\alignR}[1]{\begin{flushright}#1\end{flushright}}
\newcommand{\eqsys}[2]{\left\{\Array{l}{#1\\#2}\right.}
\newcommand{\bproof}{

\smallskip\noindent $\blacktriangleright$ }
\newcommand{\eproof}{\alignR{$\blacktriangleleft$}}
\newcommand{\eps}{\varepsilon}
\newcommand{\ds}{\displaystyle}
\newcommand{\sign}{\mathop{\rm sign}\nolimits}
\newcommand{\tr}{\kappa}
\newcommand{\uplim}{\mathop{\overline{\lim}}\limits}
\newcommand{\downlim}{\mathop{\underline{\lim}}\limits}
\newcommand{\llim}[1]{\lim\limits_{#1\ra\infty}}
\newcommand{\upllim}[1]{\uplim_{#1\ra\infty}}
\newcommand{\downllim}[1]{\downlim_{#1\ra\infty}}
\renewcommand{\phi}{\varphi}
\newcommand{\teta}{\theta}
\newcommand{\defst}[1]{\textit{#1}}
\newtheorem{theorem}{Теорема}
\newtheorem{lemma}{Лемма}
\newtheorem{Lemma}{Основная лемма}
\newcommand{\omlen}{L_{\Omega}}
\newenvironment{proof}{\bproof}{\eproof}
\begin{document}
\begin{center}
\large{\textbf{Оценка снизу скорости блуждания решения линейного дифференциального уравнения третьего порядка через частоту нулей}}

\vspace{7mm} 

\large{\textbf{Тихомирова А.В.}}

\end{center}

В работе сравниваются две характеристики решений линейных дифференциальных уравнений 3-го порядка с переменными коэффициентами. 
Доказано, что для решения на конечном отрезке длина проекции фазовой кривой на единичную сферу оценивается снизу через число нулей решения. В предельном случае, когда длина отрезка стремится к бесконечности, рассматриваются соответствующие средние величины: скорость блуждания и частота нулей, и для их отношения получена точная нижняя оценка.

\subsection{Описание исследуемых величин}
Рассмотрим линейное дифференциальное уравнение $n$-го порядка
\begin{equation}
y^{(n)}+a_1\left(t\right)y^{\left(n-1\right)}+\dots +a_{n-1}\left(t\right)\dot{y}+a_n\left(t\right)y=0,\  t\in \ {{\mathbb R}}^+,
\label{ur1}
\end{equation}
с переменными коэффициентами $a_1,...,a_n\in C(\RR^+)$.
Обозначим  через $\nu(y,t)$ число нулей решения $y$ на промежутке $\left[0,t\right]$.

\bigskip
\noindent{\underbar{Определение 1.}}  Назовем \defst{нижней} (соотв. \defst{верхней}) \defst{частотой нулей} решения $y$ величину
$$\check{\nu}(y)= {\mathop{\underline{\lim }}_{t\to \infty} \frac{\pi }{t}\nu(y,t)}\quad\rbr{\mbox{соотв. }\hat{\nu}(y)= {\mathop{\overline{\lim }}_{t\to \infty } \frac{\pi}{t}\nu(y,t)}}.$$ 
Если эти пределы равны, то их общее значение $\nu(y)$ назовем \defst{точной частотой нулей} решения $y$.


\bigskip
\noindent {\underbar{Определение 2.}} Назовем \defst{нижней} (соотв. \defst{верхней}) \defst{скоростью блуждания} вектор-функции $x:{{\mathbb R}}^+\to {{\mathbb R}}^n$ величину
\begin{equation}
\check{\mu}(x)=\mathop{\underline{\lim }}_{t\to \infty } \frac{1}{t}\gamma(x,t)\quad\rbr{\mbox{соотв. }\hat{\mu }(x)={\mathop{\overline{\lim }}_{t\to \infty }\frac{1}{t}\gamma(x,t)}},
\label{ur2}
\end{equation}
где
$$\gamma(x,t)=\int\limits^t_0{\left|\frac{d}{d\tau}\left(\frac{x\left(\tau \right)}{|x(\tau)|}\right)\right|}\,d\tau ,\quad |x|=\sqrt{x_1^2+...+x_n^2}.$$ \\
Если для функции $x$ пределы \eqref{ur2} равны, то их общее значение $\mu(x)$ назовем \defst{точной скоростью блуждания} вектор-функции $x$.

Геометрическая интерпретация величины $\gamma(x,t)$:  $\gamma(x,t)$ --- это длина траектории единичного вектора $\frac{x(\tau)}{|x(\tau)|}$ за время от 0 до $t$.

Нас будут интересовать величины $\gamma(x,t)$ и $\mu(x)$ в случае равенства $x = \left(y,\dot{y},...,y^{(n-1)}\right)$, где $y$ --- решение уравнения \eqref{ur1}. В этом случае будем обозначать:
\begin{equation}
\gamma_n(y,t)=\gamma(x,t),\quad
\check{\mu}_n(y)=\check{\mu}(x),\quad
\hat{\mu}_n(y)=\hat{\mu}(x),
\label{ur3}
\end{equation}

В работе \cite{sergeev} рассматривались более общие функционалы решений, в частности,
$$ 
\hat{\zeta}(x)=\uplim_{t\ra\infty}\frac{\pi}{t}\inf_{m\in\RR^n\setminus\{0\}}\nu(\sum_{i=1}^n x_i m_i,t)
\quad\rbr{\check{\zeta}(x)=\downlim_{t\ra\infty}\frac{\pi}{t}\inf_{m\in\RR^n\setminus\{0\}}\nu(\sum_{i=1}^n x_i m_i,t)},$$
$$ 
\hat{\eta}(x)=\uplim_{t\ra\infty}\frac{1}{t}\inf_{L\in Aut(R^n)}\gamma(Lx,t)
\quad\rbr{\check{\eta}(x)=\downlim_{t\ra\infty}\frac{1}{t}\inf_{L\in Aut(R^n)}\gamma(Lx,t)},$$
и для них были получены оценки:
$$\check{\eta}(x)\ge\check{\zeta}(x),\qquad\hat{\eta}(x)\ge\hat{\zeta}(x),$$
не зависящие от $n$.
Изменяя допустимые области значений автоморфизма $L$ и направления $m$, получаем другие варианты величин, для которых можно тоже ставить вопрос об их сравнимости.

В настоящей работе исследуется случай, когда пространство трехмерно, $L$ всегда тождественен, а $m\equiv(1,0,0)$. Приведенные доказательства существенно используют, что пространство трехмерно, и поэтому результаты относятся только к трехмерному случаю.


\subsection{Формулировка результатов}
В данной работе рассматривается случай $n=3$, когда \eqref{ur1} имеет вид
$$y'''(t)+a(t)y''(t)+b(t)y'(t)+c(t)y(t)=0, \quad a,b,c\in C(\RR^+).\eqno (1')$$

\bigskip

\noindent Обозначим
\begin{tabbing}
$x$\hspace{13pt} \=$ = (x_0,x_1,x_2) = \rbr{y,\dot{y},\ddot{y}}$ --- точка в фазовом пространстве,\\
$\Omega_+$\hspace{7pt} \= --- область $\{x_0x_2-x_1^2>0, x_2>0\}$ на сфере $|x|=1$ в фазовом пространстве\\
$\partial\Omega_+$\> --- граница области $\Omega_+$ на единичной сфере,\\
$\omlen$\> --- длина $\partial\Omega_+$ (абсолютная постоянная). В Добавлении показано, что 
\end{tabbing}
$$\omlen = 4\int\limits_{0}^{\pi}\frac{\sqrt{5 - \cos\alpha}}{7 + \cos\alpha}\,d\alpha 
 .$$

\begin{theorem}
Для любого ненулевого решения $y$ уравнения $(1')$ и любого $t>0$ верна оценка
$$\gamma_3(y,t)>\half[\nu(y,t)-5]\omlen. 
$$
\end{theorem}

\begin{theorem}$ $
\begin{enumerate}
\item[а)] Для любого ненулевого решение  $y$ уравнения $(1')$ верны оценки $$\ds\hat{\mu}_3(y)\ge\frac{\omlen}{2\pi}\hat{\nu}(y),\quad 
 \ds\check{\mu}_3(y)\ge\frac{\omlen}{2\pi}\check{\nu}(y).$$ 
\item[б)] Для любого $\eps>0$ существуют гладкие функции $a$, $b$, $c$ и $y$, связанные равенством $(1')$, такие что $\mu_3(y)<\rbr{\frac{\omlen}{2\pi}+\eps}\nu(y)$.
\end{enumerate}
\end{theorem}

\newcommand{\vtr}{\tilde{\tr}}
\newcommand{\vfi}{\tilde{\phi}}
\newcommand{\vteta}{\tilde{\teta}}
\newcommand{\mdpi}{\ (\mathrm{mod}\ 2\pi)}

\subsection{Доказательства.}

В случае $n=3$ фазовая кривая примет вид $x(t)=\rbr{y(t),y'(t),y''(t)}$. 
Обозначим $\ds\tr(t):=\frac{x(t)}{|x(t)|}$~--- проекция фазовой кривой на единичную сферу. Если решение ненулевое ($x(t)\not\equiv 0$), то по теореме единственности $\ds\forall t\ x(t)\ne 0\ \Ra\ |x(t)|>0\ \Ra\ \tr(t)$ определено в любой точке, т.е. определение корректно.

Введем следующие обозначения:
\\$\Omega$ --- область $\{x_2x_0-x_1^2>0\}$ на сфере $|x|=1$ в фазовом пространстве ;
\\$\Omega_-$ --- часть области $\Omega$, где $x_2<0$.\\
Таким образом, $\Omega=\Omega_+\sqcup\Omega_-$.\\
В дальнейшем будем отождествлять $x(t)$ с вектором $\left(y(t),\dot{y}(t),\ddot{y}(t)\right)$. Допустим, что $y$ и $\dot{y}$ одновременно не обращаются в 0.
Перейдем к сферической системе координат:

 $$\left\{\Array{l}{\ds y=|x(t)|\cos\teta\cos\varphi,\\
\ds\dot{y}=|x(t)|\cos\teta\sin\varphi,\\
\ds\ddot{y}=|x(t)|\sin\teta.}\right.$$
Выразим $\dot{\varphi}$ через $y$, $\dot{y}$ и $\ddot{y}$:

\begin{lemma}
\begin{equation}
\dot{\varphi} = \frac{\ddot{y}y-\dot{y}^2}{y^2+\dot{y}^2}.
\end{equation}
\end{lemma}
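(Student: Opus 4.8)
The plan is to eliminate $|x(t)|$ and $\teta$ from the change-of-variables system and obtain a relation involving only $y$ and $\dot y$, which can then be differentiated. From the first two equations, $y=|x|\cos\teta\cos\varphi$ and $\dot y=|x|\cos\teta\sin\varphi$, so on the set where $y\ne 0$ we have $\tan\varphi=\dot y/y$. Differentiating both sides in $t$ gives $\dot\varphi/\cos^2\varphi=(\ddot y\,y-\dot y^2)/y^2$.

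It remains to express $\cos^2\varphi$ through $y$ and $\dot y$. Squaring and adding the same two equations yields $y^2+\dot y^2=|x|^2\cos^2\teta$, whence $\cos^2\varphi=y^2/(y^2+\dot y^2)$; the right-hand side is well defined and positive precisely because $y$ and $\dot y$ do not vanish simultaneously (this is exactly the case excluded just before the lemma, where $\cos\teta=0$ and $\varphi$ is undefined). Substituting, $\dot\varphi=\dfrac{\ddot y\,y-\dot y^2}{y^2}\cdot\dfrac{y^2}{y^2+\dot y^2}=\dfrac{\ddot y\,y-\dot y^2}{y^2+\dot y^2}$, which is the asserted formula.

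The only delicate point is the behaviour at instants where $y(t)=0$, at which $\tan\varphi$ is not defined and $\varphi\equiv\pm\pi/2\pmod\pi$. At such a point $\dot y\ne 0$, so in a neighbourhood one repeats the argument with $\cot\varphi=y/\dot y$ in place of $\tan\varphi$: differentiating gives $-\dot\varphi/\sin^2\varphi=(\dot y^2-\ddot y\,y)/\dot y^2$, and since $\sin^2\varphi=\dot y^2/(y^2+\dot y^2)$ one recovers the same expression, so the formula holds for all $t$. (Equivalently, one can observe once and for all that $\varphi(t)$ is a continuous branch of the polar angle of the planar curve $\tau\mapsto(y(\tau),\dot y(\tau))$, whose angular velocity is the standard quantity $\frac{u\dot v-v\dot u}{u^2+v^2}$ with $u=y$, $v=\dot y$, $\dot u=\dot y$, $\dot v=\ddot y$; this covers every case uniformly.) I expect this patching at the zeros of $y$ to be the only real obstacle — the differentiation itself is routine.
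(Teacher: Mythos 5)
Your proposal is correct and follows essentially the same route as the paper: differentiate $\tan\varphi=\dot y/y$ where $y\ne 0$, and patch the points $y=0$ via $\cot\varphi=y/\dot y$, relying on the standing assumption that $y$ and $\dot y$ never vanish simultaneously. Your computation of $\cos^2\varphi=y^2/(y^2+\dot y^2)$ from the spherical coordinates is just a cosmetic variant of the paper's use of $1/\cos^2\varphi=1+\tan^2\varphi$, so there is nothing substantive to add.
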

\begin{proof}
Для тех точек, где $y\ne 0$ выполнено $\frac{\dot{y}}{y}=\tg\varphi\ \Ra$
 $$\Ra\ \dot{\rbr{\frac{y'}{y}}} = \frac{\dot{\varphi}}{\cos^2\varphi}=\dot{\varphi}(1+\tg^2\varphi)=\dot{\varphi}\rbr{1+\frac{{\dot y}^2}{y^2}}$$
$$\dot{\varphi}=\frac{\frac{\ddot{y}y-\dot{y}^2}{y^2}}{1+\frac{{\dot y}^2}{y^2}}=\frac{\ddot{y}y-\dot{y}^2}{y^2+\dot{y}^2}$$
Аналогично, для точек, где $\dot{y}\ne 0$, выполнено
$\ds\frac{y}{\dot{y}}=\ctg\varphi\ \Ra$
$$\Ra\ \dot{\rbr{\frac{y}{y'}}} = -\frac{\dot{\varphi}}{\sin^2\varphi}=-\dot{\varphi}(1+\ctg^2\varphi)=-\dot{\varphi}\rbr{1+\frac{y^2}{{\dot y}^2}};$$
$$\dot{\varphi} = -\frac{\frac{\dot{y}^2-\ddot{y}y}{\dot{y}^2}}{1+\frac{y^2}{{\dot y}^2}} = \frac{\ddot{y}y-\dot{y}^2}{y^2+\dot{y}^2}.$$
Мы предположили, что $\forall t\ y(t)\ne 0\mbox{ или }\dot{y}(t)\ne 0$, утверждение леммы всюду выполнено.
\end{proof}

Отсюда $\dot{\varphi}(t)>0\ \Leftrightarrow\ \ddot{y}(t)y(t)-\dot{y}^2(t)>0\ \Leftrightarrow\ \tr(t)\in\Omega$.

Найдем границу области $\Omega$ в сферических координатах.
$$0<\ddot{y}(t)y(t)-\dot{y}^2(t)=|x(t)|^2\sin\teta\cos\teta\cos\varphi - |x(t)|^2(\cos\teta\sin\varphi)^2.$$
Вне полюсов $\cos\teta>0$, поэтому поделим неравенство на положительное число $|x(t)|^2\cos^2\teta$. Тогда: $\dot{\varphi}>0\ \Leftrightarrow\ \sin\teta\cos\phi-\cos\teta\sin^2\phi>0$, 
и граница  $\Omega$ будет иметь вид:
\begin{equation}
\teta = \teta_0(\phi):=\arctan\frac{\sin^2\phi}{\cos\phi}, \phi\not\equiv\half[\pi](\mathop{\mathrm{mod}} \pi).
\end{equation}

Заметим, что при $y=0$, $\dot\phi=\frac{-\dot{y}^2}{\dot{y}^2}=-1$, поэтому большая окружность(пересечение сферы и плоскости $y=0$) не пересекает область $\Omega$. В сферических координатах $y=0\ \Leftrightarrow\ \cos\phi=0\ \Leftrightarrow\ \phi\equiv\half[\pi] (\mathop{\mathrm{mod}} \pi)$.

Отсюда видно, что аналогично 2-мерному случаю между двумя нулями функции $y$ величина $\varphi$ должна уменьшиться на $\pi$ (расстояние между 2-мя соседними нулями косинуса). Причем внутри области $\Omega$ величина $\varphi$ увеличивается, а вне --- уменьшается.
Обозначим за $P(\phi,\teta)$ точку на сфере со сферическими координатами $\phi$ и $\teta$

Введем еще одно обозначение: $\Omega_-$--- область на сфере в фазовом пространстве $\{\ddot{y}y-\dot{y}^2>0, \ddot{y}<0\}$
Таким образом, $\Omega=\Omega_+\sqcup\Omega_-$
\newcommand{\zs}{\pi_\Omega}

\begin{lemma}\label{lemOmega}
Область $\Omega$ состоит из двух выпуклых областей $\Omega_+$ и $\Omega_-$, центрально симметричных относительно 0 и зеркально симметричных относительно плоскости $\zs$: $\ddot{y}+y=0$, причем полюсы введенной сферической системы координат лежат на ее границе.
\end{lemma}
\begin{proof}

На сфере определение выпуклости эквивалентно следующему: множество $M$ на сфере выпукло, если множество $\fbr{\alpha x:x\in A, \alpha\ge 0}$~--- выпукло в $\RR^3$ (т.е. множество, состоящее из лучей, проведенных из центра сферы во все точки множества $A$).
Для области $\Omega$ это все $(y,\dot{y},\ddot{y}): y\ddot{y}-\dot{y}^2>0$
Проверим, что это внутренность эллиптического конуса, для этого сделаем линейную замену координат: 
$$\eqsys{y=\frac{u+v}{\sqrt{2}},}{\ddot{y}=\frac{u-v}{\sqrt{2}}.}$$
Тогда та же область будет иметь вид в новых координатах: $\ds \frac{u^2-v^2}{2}-\dot{y}^2>0\ \Leftrightarrow\ u^2=v^2+2\dot{y}^2$, что является внутренностью эллиптического конуса. Эллиптический конус состоит из двух выпуклых частей, центрально симметричных относительно $0$ и зеркально симметричных относительно плоскости $u=0$, или, делая обратную замену, плоскости $y+\ddot{y}=0$. Здесь важно, что замена ортогональная, поэтому зеркальная симметрия при ней сохраняется. Такой же симметрией обладает и пересечение конуса со сферой, что и требовалось доказать.
Обозначим за $\Omega_+$ ту часть, где $u>0$ и $\Omega_-$~--- где $u<0$. Для полюсов, т.е. точек с координатами $(0,0,\pm 1)$ выполняется равенство $0c-0^2=0\ \Ra\ $ такие точки находятся на границе области $\Omega$.
\end{proof}

\newcommand{\cO}{\Omega}
\begin{lemma}\label{deletePart}
Пусть наша кривая
$\tr(t)=P(\phi(t),\theta(t)), t\in[t_0,t_1]$ не проходит через полюсы сферы и  $\eqsys{\phi(t_0)\equiv\phi(t_1)\equiv \half[\pi]\mdpi}{\varphi(t_1)=\varphi(t_0)-2\pi}$, т.е. она делает один оборот вокруг полюса по часовой стрелке.

Тогда из нее можно выбросить некоторые части так, чтобы осталась непрерывная кривая $\vtr: [0;1]\ra S^2\setminus\Omega$, концы которой совпадают с концами $\tr$, т.е. $\vtr(0)=\tr(t_0),\vtr(1)=\tr(t_1)$.
\end{lemma}
\begin{proof}\\
\includegraphics[width=\linewidth]{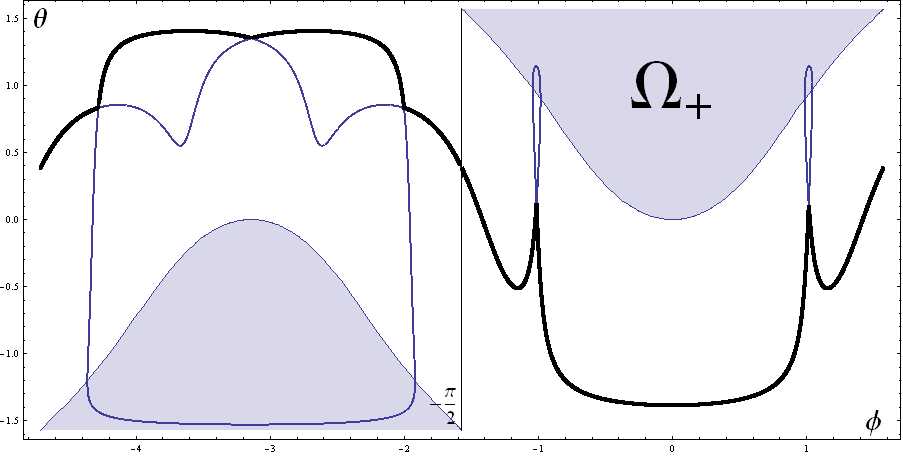}
\newcommand{\T}{\tilde{t}}
\newcommand{\dO}{\partial\Omega}

{\bf I.} Сначала покажем, что $\forall a\in[\phi(t_1);\phi(t_0)]\ \exists t_a\in(t_0;t_1): \phi(t)=a \mbox{ и } \dot{\phi}(t)\le 0$:\\
Пусть $ t_a:= \sup\fbr{t:\phi(t)\ge a}$. Тогда $\phi(t_a+h)<a$ при $h>0$, значит из непрерывности $\phi(t)$ сразу следует, что
$$\eqsys{\phi(t_a)\ge a,}{\ds \phi(t)=\lim\limits_{h\ra +0} \phi(t_a+h)\le a}\ \Ra\ \phi(t_a)=a;$$
$$\dot{\phi}(t_a)=\lim\limits_{h\ra 0}\frac{\phi(t_a+h)-\phi(t_a)}{h} = \lim\limits_{h\ra+0}\frac{\phi(t_a +h) - a}{h} \le 0.$$
Поскольку $\dot{\phi}(t_a)\le 0$, то $\tr(t_a)\notin\Omega$.
\newcommand{\tm}{t_{1/2}}
Также можно заметить, что $\exists ! \tm: \phi(\tm)=\phi(t_0)-\pi\equiv-\half[\pi]\mdpi \Ra \dot{\phi}(\tm)<0$, значит через "прямую" $\phi=\phi(\tm)$ можно перейти только в одну сторону, а значит только один раз (на это прямая $-\half[\pi]$).

 Поэтому достаточно выделить часть кривой $ \tr $, соединяющую $\tr(t_0)$ и $\tr(\tm)$, не пересекающуюся с $\Omega$, а потом часть, соединяющую $\tr(\tm)$ и $\tr(t_1)$. При этом поиск второй части аналогичен первой, достаточно поменять знак $t$ и поменять $-t_0$ и $-t_1$ местами.
Поэтому осталось доказать, что из части
$\fbr{\tr(t):t\in[t_0;\tm]}\setminus \Omega$ можно выделить кривую, соединяющую $\tr(t_0)$ и $\tr(\tm)$.

{\bf II.} Будем теперь рассматривать все на отрезке $[t_0;\tm]$. Здесь $\phi(t)\in[\phi(t_0)-\pi,\phi(t_0)]$, поэтому область $\Omega$ имеет вид $\half[\pi]>\teta>\teta_0(\phi)$ (граница области $\Omega$).

Обозначим $M:=\fbr{\tr(t):t\in[t_0;\tm]}\setminus\Omega, M_a := \fbr{\teta(t):\phi(t)=a,\tr(t)\notin\Omega}$ -- выпускаем из полюса сферы лучи под углом $a$ и пересекаем его с множеством $M$. (А на рисунке $M_a$  --- пересечение вертикальной линии с графиком к вне области $\Omega$)

 $t_a\in M_a$(по определению $t_a$) $\Ra M_a$ непусто. 
Из того, что $\tr(t)$ пересекает полуокружности $\phi=\phi(t_0)$ и $\phi=\phi(t_0)-\pi$ по одному разу следует, что
\begin{equation}
M_{\phi(t_0)}=\fbr{\teta(t_0)},M_{\phi(\tm)}=\fbr{\teta(\tm)}. \label{M0m}
\end{equation}

{\bf III.} Рассмотрим функцию $\Theta(a) := \min M_a$ на отрезке $[\phi(\tm),\phi(t_0)]$. (На рисунке это выделено жирной линией). Тогда $P(a,\Theta(a))\in M$.  Из \eqref{M0m} получаем, что $\Theta(\phi(t_0))=\teta(t_0),\Theta(\phi(\tm))=\teta(\tm)$, поэтому
$$\Array{l}
{P(\phi(t_0),\Theta(\phi(t_0)))=P(\phi(t_0),\teta(t_0))=\tr(t_0),\\
P(\phi(\tm),\Theta(\phi(\tm)))=P(\phi(\tm),\teta(\tm))=\tr(\tm).}$$

Осталось показать, что $\Theta(a)$ непрерывна на своей области определения. Допустим, $a_0$~--- точка разрыва. Тогда есть две возможности:
\begin{itemize}
\item  $\Theta(a_0)> y_0:=\downlim_{a\ra a_0}\Theta(a)=\llim{n}\Theta(a_n)$ для некоторой последовательности ${a_n\ra a}$.
Поскольку $M=[\tr]\setminus \Omega$~--- замкнутое множество, а $P(a_n,\Theta(a_n))\in M$, то $P(a_0,y_0)=\llim{n}P(a_n,y_n) \in M$, поэтому $y_0\in M_{a_0}\ \Ra\ \Theta(a_0)\le y_0$~--- противоречие.
\item $\Theta(a_0)<y_0:=\uplim_{a\ra a_0}\Theta(a)=\llim{n}\Theta(a_n)$ для некоторой последовательности ${a_n\ra a}$.
Аналогично предыдущему случаю, $P(a_0,y_0)=\llim{n}P(a_n,y_n) \in M$, поэтому $y_0\le\teta_0(a_0)\ \Ra\ \Theta(a_0)<\teta_0(a_0)$.

$\Theta(a_0) = \teta(t),a_0=\phi(t)$ для некоторого $t$. При этом $\teta(t)<\teta_0(\phi(t))\ \Ra\ \tr(t)\notin\partial\Omega\ \Ra\ \dot{\phi}(t)>0$. Тогда по теореме об обратной функции
$$\exists t(a),\delta>0 : t(a_0)=t,\forall a:|a-a_0|<\delta\ \Ra\ \phi(t(a))=a.$$
 Из непрерывности $\teta(t(a))$, 
$$\exists 0<\delta_1\le\delta: \forall a:|a-a_0|<\delta_1\Ra |\underbrace{\teta(t(a))}_{\in M_a}-\underbrace{\teta(t(a_0))}_{\Theta(a_0)}|<\half[y_0-\Theta(a_0)].$$
Отсюда при $|a-a_0|<\delta_1$ $\min M_a\le \teta(t(a))\le \Theta(a_0)+\half[y_0-\Theta(a_0)]$, значит $\uplim_{a\ra a_0}\le \Theta(a_0)+\half[y_0-\Theta(a_0)]<y_0$~--- противоречие.
\end{itemize}
Значит $\Theta(a)$ непрерывна, а следовательно, кривая $P(a,\Theta(a))$ и есть искомая.

 Параметризацию всегда можно выбрать так, чтобы новая кривая была определена на отрезке $[0;1]$. Это всегда можно сделать масштабированием. Длина кривой от параметризации не зависит, а нас в конечном итоге интересует ее длина.
\end{proof}

Пусть есть точка $M$ на сфере и кривая $\gamma:[0;1]\ra S$, причем точки $M$ и противоположная ей не лежат на $\gamma$. Определим индекс точки $M$ на сфере относительно кривой $\gamma$.
Введем сферическую систему координат с полюсом в точке $M$. 

\begin{lemma}\label{oneCurve}
Существует кратчайшая замкнутая кривая $l:[0;1]\ra S\setminus\Omega$, для которой в сферических координатах 
\smallskip
\\$\phi(0)=0,\phi(1)=2\pi n$ (кривая делает $n$ оборотов вокруг полюса),
\smallskip
\\$\teta(0)=\teta(1)$ (кривая замкнута).
\end{lemma}
\begin{proof}
Рассмотрим множество $K:=\fbr{(\phi,\teta)\in \RR^2: P(\phi,\teta)\in S\setminus\Omega}$. На этом множестве введем риманову метрику, соответствующую метрике на сфере: $g_{ij}(\phi,\teta)=\left(\Array{cc}{\cos^2\teta&0\\0&1}\right)$. Рассмотрим $K$, как метрическое пространство с функцией расстояния $\rho_S$, порожденной метрикой $g_{ij}$:
$$\rho_S(x,y):= \inf\limits_{\gamma:[0;1]\ra K:\gamma(0)=x,\gamma(1)=y} \int\limits_{0}^{1}\sqrt{ \phi'^2(s)\cos^2\teta+\teta'^2(s)}\ds.$$
 Все точки $x,y:\rho_S(x,y)=0$ будем считать совпадающими. Это те точки, которые соответствуют полюсам сферы, т.е. где $|\teta|=\half[\pi]$ и $\phi$ отличается не более, чем на $\pi$. Тогда $K$ --- замкнутое локально компактное метрическое пространство. Отсюда по следствию из  теоремы Хопфа-Ринова \cite[теор.10.9]{hr} существует кратчайшая, соединяющая любые 2 точки из $K$. То есть $\inf$ в определении $\rho_S$ на самом деле достигается. 
 
 Отображение $(\phi,\teta)\ra P(\phi,\teta)$, переводящее $K$ в $S\setminus\Omega$, по построению является локально изометричным, поэтому длины кривых сохраняются. А значит длина кривой $\gamma$, заданной в сферических координатах $\gamma(s)=P(\phi(s),\teta(s)$, будет равна длине кривой $(\phi(s),\teta(s))$ в множестве $K$. Поэтому поиск кратчайшей замкнутой кривой сводится к поиску кривой наименьшей длины в $K$, с теми же условиями, то есть
\begin{equation}\label{usll}
\phi(0)=0,\phi(1)=2\pi n,\teta(0)=\teta(1).
\end{equation}
 Длина любой кривой, удовлетворяющей этим условиям, не менее $\rho_S((0,\teta(0)),(2\pi n,\teta(1)))=\rho_S((0,a),(2\pi n,a)),\teta(0)=\teta(1)=a$.

Пусть $L(a):=\rho_S((0,a),(2\pi n,a)),|a|\le\half[\pi]$. Тогда по неравенству треугольника
$$L(b)\le\underbrace{\rho_S((0,b),(0,a))}_{\le|b-a|}+\underbrace{\rho_S((0,a),(2\pi n,a))}_{L(a)} +\underbrace{\rho_S((2\pi n,a),(2\pi n,b))}_{\le|b-a|}\le 2|b-a|+L(a).$$
Отсюда $|L(b)-L(a)|\le 2|b-a|$, откуда следует, что функция $L$ непрерывна. Значит она достигает своего минимума на отрезке $[-\half[\pi];\half[\pi]]$ в некоторой точке $a_0$. Пусть $l_0$ --- кратчайшая, соединяющая $(0,a_0)$ и $(2\pi n,a_0)$. Тогда она удовлетворяет условиям \eqref{usll} и по построению ее длина, равная $L(a_0)<L(a)$, не превосходит любой другой кривой, удовлетворяющей \eqref{usll}. Отображение $(\phi,\teta)\ra P(\phi,\teta)$ кривой $l_0$ на сферу и будет искомой кривой $l$.
\end{proof}

Поскольку кривая замкнута, то условия на кривую эквивалентны следующим:
\\$|\phi(t_0)-\phi(t_1)|=2\pi n$,
\\$\teta(t_0)=\teta(t_1)$.\\
для некоторых $t_1>t_0$. Заменой параметризации и начальной точки кривой, этот случай сводится к рассмотренному в лемме \ref{oneCurve}.

\begin{lemma}\label{KLength}
Пусть кривая $\vtr(t)=P(\phi(t),\teta(t)),t\in[t_0;t_1]$ не пересекает $\Omega$ и $\phi(t_1)=\phi(t_0)-2\pi N$, не проходит через полюсы $(\teta(t)\ne \pm \half[\pi])$ и каждое значение $\phi$ принимает ровно 1 раз.

Тогда ее длина не менее, чем $N\omlen-|\teta(t_1)-\teta(t_0)|$.
\end{lemma}
\begin{proof}

\newcommand{\ntr}{g}

Сведем задачу к замкнутой кривой на полусфере, обходящей $N$ раз выпуклую кривую $\partial\Omega$. Для этого сначала добавим к кривой дугу большой окружности, соединяющей $\vtr(t_0)$ и $\vtr(t_1)$; т.к. $\phi(t_1)=\phi(t_0)-2\pi N\equiv \phi(t_0)\mdpi$, то длина этой дуги равна $|\teta(t_1)-\teta(t_0)|$. Обозначим полученную замкнутую кривую $\tilde{l}$. Пусть $l$ --- кратчайшая замкнутая кривая с условиями $|\phi(t_0)-\phi(t_1)|=2\pi n,\teta(t_0)=\teta(t_1)$, существующая по лемме 3. Тогда длина $\tilde{l}$ не меньше, чем длина $l$.

 Отразим теперь часть сферы $\ddot{y}+y<0$ относительно плоскости $\ddot{y}+y=0$. При этом $\Omega_-$ перейдет в $\Omega_+$ по лемме \ref{lemOmega}. Такая симметрия~--- непрерывное преобразование, поэтому наша замкнутая кривая $l$ перейдет в замкнутую кривую, которую обозначим за $\ntr(t)$. Ее длина будет равна длине $l$, поскольку ее часть, отличная от $l$, получена из $l$ ортогональным преобразованием(отражением), которое сохраняет метрику. Таким образом, $\ntr$ --- тоже кратчайшая замкнутая кривая, как и $l$, но уже лежащая на полусфере.

Покажем, что кривая $\ntr$ целиком лежит на $\partial\Omega_+$. Поскольку $\ntr$ --- кратчайшая кривая, то она должна быть локально кратчайшей, поэтому если $\ntr(t)\notin\partial\Omega,t\in (a;b)$, то участок $\ntr([a;b])$ должен быть геодезической, соединяющей точки $l(a)$ и $l(b)$, то есть дугой большой окружности. Отсюда следует, что $\ntr([0;1])$ состоит из частей $\partial\Omega$, соединенных друг с другом дугами больших окружностей. Но область $\Omega_+$ выпукла, поэтому кратчайшая геодезическая (меньшая дуга большой окружности), соединяющая точки $\partial\Omega_+$, лежит внутри $\overline{\Omega_+}$. Если геодезическая не кратчайшая, тогда это б\'ольшая дуга большой окружности, которая выходит за пределы полусферы. Если существует несколько геодезических, то это противоположные точки на сфере, но на $\partial\Omega_+$ нет противоположных точек. Поэтому не существует геодезических на полусфере, соединяющих различные точки $\partial\Omega_+$. Отсюда следует, что $g$ полностью лежит на $\partial\Omega_+$.

Поскольку в сферических координатах для кривой $g$ $|\phi(0)-\phi(1)|=2\pi n$, то каждый меридиан  $\phi\equiv a\mdpi$ кривая $g$ пересекает не менее $N$ раз, и через каждую точку $\partial\Omega$ проходит не менее $N$ раз, поэтому ее длина не менее $N\omlen$. поскольку эта кривая кратчайшая, то ее длина равна ровно $N\omlen$, иначе кривая, $N$ раз проходящая по границе $\Omega_+$ была бы короче ее.
 Отсюда 
 $$L(\vtr)\ge L(\tilde{l})-|\teta(t_1)-\teta(t_0)|\ge L(g)-|\teta(t_1)-\teta(t_0)|= N\omlen-|\teta(t_1)-\teta(t_0)|.$$
\end{proof}

\setcounter{theorem}{0}
\noindent\textit{Доказательство Теоремы 1.}
\begin{proof}
Сначала будем считать, что у $y$ нет кратных нулей.
Пусть $\floorbr{\half[\nu(y,t)]}-1=N$, т.е. у функции $y$ не менее, чем $2(N+1)$ нулей. Это означает, что наша кривая $\tr(t)$ пересекает $N+1$ раз дугу $\phi=2\pi k$, а это значит, к каждому участку кривой $\tr(t)$ между 2-мя такими пересечениями можно применить лемму \ref{deletePart}, и получим кривую $\vtr$, на которой $\phi$ изменяется на ту же величину, длина которой не больше, чем длина $\tr$, но уже не пересекающуюся с $\Omega_+$, а значит удовлетворяющую лемме \ref{KLength}. Применяя ее и учитывая, что в сферической системе $\teta\in[-\half[\pi];\half[\pi]]$, получаем:
$$\gamma_3(y,t)=L(\tr, [0,t])\ge L(\vtr)\ge N\omlen - |\tilde{\teta}(t)-\tilde{\teta}(0)| \ge \rbr{\floorbr{\half[\nu(y,t)]}-1}\omlen-\pi.$$
Учитывая, что отрезок длины $\half[\pi]$ соединяет две точки $\partial\Omega_+$ ($(1,0,0)$ и $(0,0,1)$), то длина кривой не менее, чем $\pi$. Поэтому
$$\gamma_3(y,t)\ge\rbr{\floorbr{\half[\nu(y,t)]}-1}\omlen-\pi\ge \rbr{\floorbr{\half[\nu(y,t)]}-2}\omlen>\half[\nu(y,t)-5]\omlen.$$

Теперь сведем случай с кратными нулями к уже рассмотренному. Для этого заметим, что на конечном отрезке достигается минимум непрерывной функции $|x(t)|$, причем он не равен 0 (иначе решение было бы $\equiv 0$). Пусть $C:=\min\limits_{t\in[0;t_0]} |x(t)|$. 
$$\gamma(x,t_0)=\int\limits_{0}^{t_0}|\dot{\tr}(t)|\,dt=\int\limits_{0}^{t_0}\frac{\sqrt{(\dot{x},\dot{x})(x,x)+(x,\dot{x})^2}}{|x|^2}\,dt.$$
Под интегралом функция, непрерывная относительно $x$ и $\dot{x}$ при $|x|\ge C>0$, а значит равномерно непрерывна на компакте $\fbr{x: |x|+|\dot{x}|\le M, |x|\ge C>0}$. $x,\dot{x}$ ограничены на конечном отрезке, поэтому $\gamma(x,t_0)$ непрерывна по $x$ относительно нормы $\|x\|_{C^1}$, а значит $\gamma_3(y,t_0)$ непрерывна по $y$ относительно нормы $\|y\|_{C^3}$. Поэтому если взять последовательность $y_n\ra y$ в норме $C^3[0;t_0]$ такую, что $\nu(y_n,t)\ge \nu(y,t)$, то $\gamma_3(y,t_0)=\llim{n}\gamma_3(y_n,t_0)\ge \rbr{\floorbr{\half[\nu(y,t)]}-1}\omlen-\pi$. Далее будем строить такую последовательность.

Пусть $(t_i)_{i=1}^m$~--- точки, в которых $y=\dot{y}=0$. Их конечное число, иначе была бы предельная точка, и в ней $\ddot{y}=0$, но тогда $y\equiv 0$.
Положим $y_n=y+\frac{1}{n}\Delta y$, где
$$
\Array{l}{
\Delta y(t_i)=-\sign \ddot{y}(t_i),\\
\Delta \dot{y}(t_i)=\Delta\ddot{y}(t_i)=\Delta \dddot{y}(t_i)=0;\\
\Delta y(t) = -\sign\ddot{y}(t_1), t<t_1,\\
\Delta y(t) = -\sign\ddot{y}(t_m), t>t_m.
}
$$
На отрезках $[t_i;t_{i+1}]$ функция $\Delta y$ определяется многочленом степени 7 по двум точкам и трем производным в них, исходя из первых двух условий. Тогда по построению $\Delta y\in C^3$ и при достаточно большом $n$ функция $y+\frac{1}{n}\Delta y$ имеет нулей не меньше, чем $y$, но нет кратных, что и требовалось.
\end{proof}

\begin{Lemma}
$\forall\eps>0$ существуют гладкие функции $a(t),b(t),c(t)$ такие, что существует решение $y(t)$ уравнения $(*)$, для которого выполнено:
$$\nu(y)\ne 0 \mbox{ и }\frac{\mu_3(y)}{\nu(y)}<\frac{\omlen}{2\pi}+\eps.$$
\end{Lemma}
\begin{proof}
Рассмотрим такую функцию, для которой проекция фазовой кривой на сферу является замкнутой кривой, близкой к $\partial\Omega_+$. Для этого сначала зададим саму эту проекцию в сферической системе: $\teta=F(\phi)$ так, чтобы график $F$ был вне $\overline{\Omega}_+$, но его длина была $\omlen+\delta$, это можно сделать, если взять гладкую\footnote{можно взять и кусочно-гладкую $F$, но тогда коэффициенты уравнения будут разрывными по $t$} $F$, приближающую снизу функцию $\teta_0(\phi)$, задающую $\partial\Omega_+$. Для разность длин кривых $\teta=f(\phi)$ и $\teta=g(\phi)$ на сфере будет такая формула:
\begin{equation}
\oint\limits_{\gamma_f}ds-\oint\limits_{\gamma_g} ds=\int\limits_{0}^{2\pi}\rbr{\sqrt{{f'}^2+\cos^2 f}-\sqrt{{g'}^2+\cos^2 g}}\,d\phi\le \int\limits_{0}^{2\pi}(|f'-g'|+|f-g|)\,d\phi,
\label{lendiff}
\end{equation}
т.к. 
$$ds^2=d\teta^2+d\phi^2\cos^2\teta,$$
а второе неравенство выполнено, поскольку:
$$|\sqrt{a^2+b^2}-\sqrt{c^2+d^2}|=\frac{|a^2+b^2-c^2-d^2|}{\sqrt{a^2+b^2}+\sqrt{c^2+d^2}}\le
\frac{|a^2-c^2|+|b^2-d^2|}{\sqrt{a^2+b^2}+\sqrt{c^2+d^2}}\le$$
$$\le\frac{|a^2-c^2|}{|a|+|c|}+\frac{|b^2-d^2|}{|b|+|d|}\le ||a|-|c||+||b|-|d||\le |a-c|+|b-d|.$$
Пусть $F$ --- гладкая функция, которая приближает функцию $\tilde{\teta}_0-\frac{\delta}{4\pi}$ с точностью $\frac{\delta}{16\pi}$  по норме $W^1_1$, где $\tilde{\teta}_0(\phi)=\left\{\Array{ll}{\teta_0(\phi),&\phi\in(-\pi/2,\pi/2);\\\half[\pi],&\mbox{иначе.}}\right.$

Тогда везде $F(\phi)\le \tilde{\teta}_0(\phi)-\frac{\delta}{4\pi}+\frac{\delta}{8\pi}\le\half[\pi]-\frac{\delta}{8\pi}$.
Из \eqref{lendiff} получаем, что 
$$\left|\int\limits_{0}^{2\pi}F(\phi)\,d\phi-\omlen\right|=\left|\int\limits_{0}^{2\pi}F(\phi)\,d\phi-\int\limits_{0}^{2\pi}\teta_0(\phi)\,d\phi\right|\le 2\pi\frac{\delta}{4\pi}+\frac{\delta}{16\pi}<\delta.$$
Далее поскольку $(\phi,F(\phi))\notin\overline{\Omega}$, то по определению $\Omega$ это означает, что $y\ddot{y}-\dot{y}^2<0$, или, что то же самое, $\dot{\phi}=\frac{y\ddot{y}-\dot{y}^2}{y^2+\dot{y}^2}<0$. Выразим $\dot{\phi}$ через $\phi$ и $\teta$:
\begin{equation}
\dot{\phi}=\frac{y\ddot{y}-\dot{y}^2}{y^2+\dot{y}^2}=\frac{\sin\teta \cos\phi\cos\teta - (\sin\phi\cos\teta)^2}{\underbrace{(\cos\phi\cos\teta)^2+(\sin\phi\cos\teta)^2}_{\cos^2\teta}}=\tg\teta\cos\phi-\sin^2\phi.
\label{eqphi0}
\end{equation}

Учитывая, что $\teta=F(\phi)$, и переворачивая обе части равенства, получаем:
$$\frac{dt}{d\phi}=\frac{1}{\tg\teta\cos\phi-\sin^2\phi}=\frac{1}{\tg F(\phi)\cos\phi-\sin^2\phi}<0.$$
Пусть $\phi(0)=\pi$. Тогда $\ds t(\phi)=\int\limits_{\pi}^{\phi}\frac{d\phi}{\tg F(\phi)\cos\phi-\sin^2\phi}\downarrow$. Так как функция $F$ периодическая по $\phi$, с периодом $2\pi$ (т.к. ее график на сфере~--- замкнутая кривая), то $t(2\pi k+\pi)=k t(3\pi)\ra\infty,k\ra-\infty$. Отсюда и из строгой монотонности следует, что обратная функция $\phi_0(t)$ определена всюду на $[0;+\infty)$ и $\phi_0(t)\ra-\infty,t\ra+\infty$.
Найдем уравнение на функцию $y$, соответствующую такой кривой на сфере. Для этого посчитаем $\dot{\teta}$ через производные $y$. $\teta = \arctan\frac{\ddot{y}}{\sqrt{y^2+\dot{y}^2}}$, значит
$$\dot{\teta}=\frac{\rbr{\frac{\ddot{y}}{\sqrt{y^2+\dot{y}^2}}}'}{1+\frac{\ddot{y}^2}{y^2+\dot{y}^2}}=\frac{\dddot{y}\sqrt{y^2+\dot{y}^2}-\frac{
\ddot{y}(y\dot{y}+\dot{y}\ddot{y})}{\sqrt{y^2+\dot{y}^2}}}{y^2+\dot{y}^2+\ddot{y}^2}.$$
Отсюда выражаем $\dddot{y}$, учитывая, что
\newcommand{\syy}{\sqrt{y^2+\dot{y}^2}}
$\frac{y}{\syy}=\cos\phi,\frac{\dot{y}}{\syy}=\sin\phi,\frac{\ddot{y}}{\syy}=\tg\teta$:
$$\dddot{y}=\frac{\dot{\teta}(y^2+\dot{y}^2+\ddot{y}^2)}{\sqrt{y^2+\dot{y}^2}}+\frac{\ddot{y}(y\dot{y}+\dot{y}\ddot{y})}{y^2+\dot{y}^2}=\dot{\teta}\rbr{y\cos\phi+\dot{y}\sin\phi+\ddot{y}\tg\teta}+\ddot{y}\rbr{\sin\phi(\cos\phi+\tg\teta)}.$$
Приводя подобные, получаем уравнение для $y$:
\begin{equation}
 \dddot{y}=\dot{\teta}\cos\phi\,y + \dot{\teta}\sin\phi\,\dot{y} + (\dot{\teta}\tg\teta+\sin\phi(\cos\phi+\tg\teta))\,\ddot{y}.\label{teta0eq}
\end{equation}
Учитывая, что $\phi=\phi_0(t),\teta=F(\phi_0(t))=:\Theta_0(t)$, 
 получаем, что это линейное уравнение с коэффициентами, гладкость которых зависит от гладкости $F$:

\begin{equation}
\Array{l}{
\dddot{y}=A(t)y+B(t)\dot{y}+C(t)\ddot{y},\\
A(t)=\dot{\Theta}_0(t)\cos\phi_0(t),\\
B(t)=\dot{\Theta}_0(t)\sin\phi_0(t),\\
C(t)=\dot{\Theta}_0(t)\tg\Theta_0(t)+\sin\phi_0(t)(\cos\phi_0(t)+\tg\Theta_0(t)).\\
}\label{th2eq}
\end{equation}

$|F(x)|<\half[\pi]$, поэтому $\tg\Theta_0(t)=\tg F(\phi_0(t))$ определен и непрерывен всюду. 
Проверим, что есть решение этого уравнения, для которого след на сфере будет иметь сферические координаты $(\phi_0,\teta_0)$.
Возьмем $y_0$~--- решение уравнения \eqref{th2eq} с начальными данными: $y(0)=-1,\dot{y}(0)=0,\ddot{y}(0)=\tg F(\pi)$, тогда $\phi(0)\equiv\pi\equiv\phi_0(0)\mdpi$~--- согласуется условием на функцию $\phi_0(t)$.
Исходя из равенств~\eqref{eqphi0}~и~\eqref{teta0eq} и начальных условий, для углов сферической системы координат будут выполняться уравнения: 
$$\left\{\Array{ll}
{\ds\dot{\phi}=\tg\teta\cos\phi-\sin^2\phi, & \phi(0)=\pi;\\
\ds\dot{\teta}=\frac{\dddot{y_0}-\ddot{y_0}\sin\phi(\cos\phi+\tg\teta)}{ y_0\cos\phi+\dot{y_0}\sin\phi+\ddot{y_0}\tg\teta},&\teta(0)=F(\pi).}\right.$$
Те же уравнения будут выполняться для функций $\phi_0$ и $\Theta_0$ по построению уравнения~\eqref{th2eq}. Заметим, что на решении знаменатель правой части второго уравнения будет иметь вид: 
$$y_0\cos\phi+\dot{y_0}\sin\phi+\ddot{y_0}\tg\teta=\frac{y_0^2+\dot{y}_0^2+\ddot{y}_0^2}{y_0^2+\dot{y}_0^2},$$
и нигде не обращается в 0. Поэтому в окрестности решения правые части уравнения \eqref{th2eq} непрерывны, поэтому оно единственно. Из единственности решения задачи Коши, получим, что $\phi(t)=\phi_0(t), \theta(t)=\Theta_0(t)$.

 Посчитаем $\frac{\mu_3(y)}{\nu(y)}$.
Пусть $T:=-t(3\pi)$. Так как $T$~--- период проекции фазовой кривой на сфере, то $\gamma_3(y,nT)=n\gamma_3(y,T)\le n(\omlen+\frac{3}{4}\delta)$.
Поэтому 
\newcommand{\ffl}{\floorbr{\frac{t}{T}}}
$$\mu_3(y)=\llim{t}\frac{\gamma_3(y,t)}{t}=\llim{t}\frac{\gamma_3(y,\ffl T)+O(1)}{\ffl T+O(1)}\le\llim{t}\frac{\ffl (\omlen+\delta)}{\ffl T}=\frac{\omlen+\delta}{T};$$
$$\nu(y)=\llim{t}\frac{\pi\nu(y,t)}{t}=\llim{t}\frac{\pi\nu(y,\ffl T)+O(1)}{\ffl T+O(1)}=\llim{t}\frac{2\pi\ffl + O(1)}{\ffl T}=\frac{2\pi}{T}.$$
Здесь мы учли, что на каждом периоде длиной $T$ есть ровно 2 корня: при $\phi\equiv\half[\pi]\mdpi$ и $\phi\equiv-\half[\pi]\mdpi$. Поэтому $\nu(y,nT)=2n$.
$$\frac{\mu_3(y)}{\nu(y)}<\frac{\omlen+\delta}{2\pi}.$$
Выберем $\delta<2\pi\eps$, и утверждение основной леммы будет выполнено.
\end{proof}

\noindent \textit{Доказательство теоремы 2.}
\begin{proof}
Докажем, что это нижняя оценка. Сначала для первого отношения:\\
будем считать, что $\hat{\nu}\ne 0$. Тогда $\ds\exists \{t_n\} : \hat{\nu}(x)=\lim\limits_{n\ra\infty}\frac{\pi}{t_n}\nu(x,t_n)$.
По свойству верхнего предела, и подставляя оценку из основной леммы 1, получаем требуемое: $$\ds\hat{\mu}(x)=\uplim_{t\ra\infty}\frac{\gamma(x,t)}{t}\ge \upllim{n}\frac{\gamma(x,t_n)}{t_n}\ge \upllim{n}\frac{1}{t_n}\half[\nu(x,t_n)-5]\omlen =$$ $$=\omlen\upllim{n}\rbr{\frac{\nu(x,t_n)}{2t_n}+\frac{O(1)}{t_n}}=\omlen\upllim{n}\frac{\nu(x,t_n)}{2t_n}=\frac{\omlen}{2\pi}\hat{\nu}(x).$$

Для второго соотношения все аналогично, только нужно взять \\$\ds{t_n}: \check{\mu}(x)=\llim{n}\frac{\gamma(x,t_n)}{t_n}$. По теореме 1 $\ds\nu(x,t)<\frac{2\gamma(x,t)}{\omlen}+5$, откуда можно вывести требуемое:
$$\check{\nu}(x)=\downllim{t}\frac{\pi}{t}\nu(x,t)\le \downllim{n}\frac{\pi}{t_n}\nu(x,t_n)\le \pi\downllim{n}\frac{2\gamma(x,t_n)+5}{t_n\omlen} = $$
$$=\frac{2\pi}{\omlen}\llim{n}\frac{\gamma(x,t_n)+O(1)}{t_n}=\frac{2\pi}{\omlen}\check{\mu}.$$
Из основной леммы 1 следует, что $\frac{\mu(x)}{\nu(x)}$ может быть сколь угодно близко к $\frac{\omlen}{2\pi}$, поэтому оценку нельзя улучшить.
\end{proof}

\subsection*{Добавление: вычисление постоянной $L$}
Было показано, что $\partial\Omega_+$ в сферических координатах задается формулой 
$$\teta=\teta_0(\phi)=\arctan\frac{\sin^2 \phi}{\cos\phi},\quad \phi\in\rbr{-\half[\pi],\half[\pi]}.$$
Поэтому, получаем: 
$$ds^2=\cos^2\teta \,d\phi^2+d\teta^2=(\cos^2\teta_0(\phi)+\teta_0'^2(\phi))\,d\phi^2=\rbr{\frac{1}{1+\tg^2\teta_0}+\rbr{\dfrac{\rbr{\frac{\sin^2\phi}{\cos{\phi}}}'}{1+\frac{\sin^4\phi}{\cos^2\phi}}}^2}\,d\phi^2=$$
$$=\rbr{\frac{\cos^2\phi}{\cos^2\phi+\sin^4\phi}+\frac{(2\sin\phi\cos^2\phi+\sin^3\phi)^2}{(\cos^2\phi+\sin^4\phi)^2}}\,d\phi^2=$$
$$=\frac{\cos^2\phi(\cos^2\phi+\sin^4\phi)+\sin^2\phi(2\cos^2\phi+\sin^2\phi)^2}{(\cos^2\phi+\sin^4\phi)^2}\,d\phi^2.$$
Преобразуем по отдельности числитель и знаменатель полученного выражения:
$$\cos^2\phi(\cos^2\phi+\sin^4\phi)+\sin^2\phi(2\cos^2\phi+\sin^2\phi)^2=\cos^4\phi+\cos^2\phi\sin^4\phi+\sin^2\phi(1+\cos^2\phi)^2=$$
$$=\sin^2\phi+2\sin^2\phi\cos^2\phi+\sin^2\phi\cos^4\phi+\cos^4\phi+\cos^2\phi\sin^4\phi=\sin^2\phi+\frac{\sin^2 2\phi}{2}+$$
$$+\cos^2\phi(\cos^2\phi+\sin^2\phi(\sin^2\phi+\cos^2\phi))=1+\half[\sin^2 2\phi]=1+\frac{1-\cos 4\phi}{4}=\frac{5-\cos 4\phi}{4};$$
$$\cos^2\phi+\sin^4\phi=\cos^2\phi+\sin^2\phi(1-\cos^2\phi)=1-\sin^2\phi\cos^2\phi=\frac{4-\sin^2 2\phi}{4}=\frac{7+\cos 4\phi}{8}.$$
Итак, получаем:
$$ds^2= \frac{5-\cos 4\phi}{4\rbr{\frac{7+\cos 4\phi}{8}}^2}\,d\phi^2=16\frac{5-\cos 4\phi}{(7+\cos 4\phi)^2}\,d\phi^2.$$
Отсюда находим длину кривой:
$$L=\int\limits_{-\half[\pi]}^{\half[\pi]}4\frac{\sqrt{5-\cos 4\phi}}{7+\cos 4\phi}\,d\phi=\left<\alpha=4\phi\right>=\int\limits_{-2\pi}^{2\pi}\frac{\sqrt{5-\cos\alpha}}{7+\cos\alpha}\,d\alpha=4\int\limits_{0}^{\pi}\frac{\sqrt{5-\cos\alpha}}{7+\cos\alpha}\,d\alpha$$
В последнем равенстве мы учли периодичность и четность косинуса.

\end{document}